\documentclass[11pt]{article}
\usepackage{amsmath, amssymb, amsthm}
\usepackage{graphicx}
\usepackage{hyperref}
\usepackage{url}
\usepackage{booktabs}
\usepackage[margin=1in]{geometry}
\usepackage{authblk}
\usepackage{xcolor}
\usepackage{enumitem}

\newtheorem{theorem}{Theorem}[section]

\newtheorem{corollary}[theorem]{Corollary}
\newtheorem{definition}[theorem]{Definition}
\newtheorem{remark}[theorem]{Remark}

\newtheorem{proposition}[theorem]{Proposition}

\title{An Infinite Family of 6-Regular Bi-Cayley Graphs from the Petersen Graph}
\author{Stuart Anderson}
\affil{University of Sydney, Alumni}
\date{\today}

\begin{document}

\maketitle

\begin{abstract}
We construct an infinite family of 6-regular graphs $\{G_n\}_{n\ge 3}$ by taking $n$ copies of the Petersen graph and wiring corresponding vertices according to an $n$-cycle permutation. Each $G_n$ has $10n$ vertices, $30n$ edges, and automorphism group $D_{5n}$ of order $10n$, acting with two vertex orbits of size $5n$. The graphs have girth $4$ and diameter $\lfloor n/2\rfloor+2$. We prove that $G_3$ and $G_4$ are Ramanujan graphs, satisfying $|\lambda_2| \le 2\sqrt{5}$. The first five members ($n=3,\dots,7$) have been deposited in the House of Graphs database as entries 56324--56328. This construction provides new examples of highly symmetric regular graphs and contributes two new Ramanujan graphs to the literature. All computational scripts are available online for full reproducibility.
\end{abstract}

\section{Introduction}

The Petersen graph \cite{Petersen1891} is one of the most celebrated graphs in combinatorics, serving as a counterexample to many conjectures and as the smallest snark \cite{Frucht1952, Gardner1976}. Its automorphism group $S_5$ of order 120 and its rich structure make it an ideal building block for larger graphs. The Petersen graph has girth 5, diameter 2, and is 3-regular, properties that will influence our construction.

In this paper, we introduce a family of graphs constructed from multiple copies of the Petersen graph wired together by an $n$-cycle permutation. For $n=3$, this construction yields a 30-vertex 6-regular graph (House of Graphs 56324) with dihedral symmetry $D_{15}$. For $n=4$, we obtain a 40-vertex graph (56325) with symmetry $D_{20}$, and the pattern continues for all $n\ge 3$.

The resulting graphs have several remarkable properties. They are 6-regular, have girth 4, and their diameter grows slowly as $\lfloor n/2\rfloor+2$. Their automorphism groups are the dihedral groups $D_{5n}$ of order $10n$, acting with two vertex orbits of size $5n$. This makes them \emph{bi-Cayley graphs} \cite{Muzychuk2021}, a class of graphs with interesting algebraic properties.

We prove that $G_3$ and $G_4$ are Ramanujan graphs. A $d$-regular graph is Ramanujan if its second largest eigenvalue in absolute value satisfies $|\lambda_2| \le 2\sqrt{d-1}$ \cite{LPS1988}. Such graphs are optimal expanders and have applications in network theory, cryptography, and coding theory. The existence of infinite families of Ramanujan graphs is a significant result \cite{LPS1988, Margulis1988}, and our construction adds two new members to this family.

Our main results are:
\begin{itemize}
    \item \textbf{Theorem 3.1:} $G_n$ has $10n$ vertices and $30n$ edges.
    \item \textbf{Theorem 3.2:} $G_n$ is 6-regular.
    \item \textbf{Theorem 3.3:} $\operatorname{girth}(G_n)=4$ and $\operatorname{diam}(G_n)=\left\lfloor\frac{n}{2}\right\rfloor+2$.
    \item \textbf{Theorem 4.1:} $\operatorname{Aut}(G_n) \cong D_{5n}$ (dihedral group of order $10n$), acting with two vertex orbits of size $5n$.
    \item \textbf{Theorem 5.1:} $G_3$ and $G_4$ are Ramanujan graphs.
\end{itemize}

The paper is organized as follows. Section 2 describes the construction. Section 3 establishes basic properties. Section 4 analyzes the symmetry. Section 5 presents spectral properties and the Ramanujan result. Section 6 discusses potential applications in cryptography and network theory. Section 7 examines other properties. Section 8 relates our graphs to known families. Section 9 lists open questions. Section 10 provides data availability, including links to the House of Graphs entries and downloadable SageMath scripts.

\section{Construction}

\subsection{The Oriented Meta-Graph}

Let $P$ denote the undirected Petersen graph on the vertex set $V(P) = \{0,1,\dots,9\}$ with the usual adjacency \cite{Petersen1891, Holton1993}. To unambiguously define the cross-wiring between the graph copies, we impose a strict, directed acyclic orientation on $P$, creating a directed meta-graph $\vec{P}$. We define the set of directed edges $E(\vec{P})$ as:
\[
E(\vec{P}) = \{(0,1), (0,4), (0,5), (1,2), (1,6), (2,3), (2,7), (3,4), (3,8), (4,9), (5,7), (5,8), (6,8), (6,9), (7,9)\}
\]
This specific orientation ensures that every vertex $v \in V(P)$ has a well-defined out-degree $\operatorname{out}(v)$ and in-degree $\operatorname{in}(v)$ such that $\operatorname{out}(v) + \operatorname{in}(v) = 3$.

\subsection{The Wiring Pattern}

For a fixed integer $n\ge 3$, take $n$ copies of the undirected Petersen graph, labeled $C_1, C_2, \dots, C_n$ (colors 1 through $n$). Let $\sigma = (1\;2\;\cdots\;n)$ be the standard $n$-cycle permutation. The edges of $G_n$ are formed as follows:
\begin{itemize}
    \item \textbf{Internal Edges:} All original undirected edges within each copy $C_i$ are retained.
    \item \textbf{Cross Edges:} For each \emph{directed} edge $(u,v) \in E(\vec{P})$, we add an undirected cross edge connecting vertex $u$ in copy $C_i$ to vertex $v$ in copy $C_{\sigma(i)}$, for all $i = 1,\dots,n$.
\end{itemize}

\begin{figure}[h]
\centering
\caption{The wiring pattern for $n=3$: each directed meta-edge creates a 3-cycle of connections. The three copies are shown as horizontal layers, with vertical edges representing the $n$-cycle wiring.}
\end{figure}

\subsection{Definition}

\begin{definition}
For $n\ge 3$, $G_n$ is the undirected graph with vertex set
\[
V(G_n) = \{(i,x) \mid 1\le i\le n,\; 0\le x\le 9\}
\]
with adjacency defined by:
\begin{itemize}
    \item $(i,x) \sim (i,y)$ if $\{x,y\}$ is an edge in the undirected Petersen graph $P$.
    \item $(i,x) \sim (\sigma(i),y)$ if $(x,y) \in E(\vec{P})$.
\end{itemize}
\end{definition}

Because the final graph $G_n$ is undirected, the cross-edge condition equivalently implies that $(i,x)$ is also adjacent to $(\sigma^{-1}(i), z)$ for every directed edge $(z,x) \in E(\vec{P})$. This guarantees each vertex $(i,x)$ receives exactly $\operatorname{out}(x)$ forward cross edges and $\operatorname{in}(x)$ backward cross edges. Since $\operatorname{out}(x) + \operatorname{in}(x) = 3$, every vertex is incident to exactly 3 cross edges, ensuring strict 6-regularity.

\section{Basic Properties}

\subsection{Vertex and Edge Counts}

\begin{theorem}
\label{thm:counts}
$G_n$ has $10n$ vertices and $30n$ edges.
\end{theorem}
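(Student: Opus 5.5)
The plan is to count vertices directly from the definition, then count edges by splitting them into internal and cross edges, counting each family separately, and checking that no edge is counted twice and the families do not overlap.

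\textbf{Vertices.} The vertex set is $\{(i,x) : 1\le i\le n,\ 0\le x\le 9\}$, a product of an $n$-set and a $10$-set. So $|V(G_n)|=10n$.

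\textbf{Internal edges.} Each copy $C_i$ is a copy of $P$, which has $15$ edges. Edges in different copies are distinct because they join vertices with different first coordinates. This gives $15n$ internal edges.

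\textbf{Cross edges.} I will show that the map $(i,(u,v))\mapsto \{(i,u),(\sigma(i),v)\}$, from $\{1,\dots,n\}\times E(\vec P)$ to the cross edges, is a bijection. It is surjective by definition.

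For injectivity, suppose two pairs give the same edge. Since $n\ge 3$, we have $\sigma(i)\neq i$, so the two endpoints lie in different copies. The copy indices determine which endpoint is the tail. The only possible collision is a reversed matching: $(i,u)=(\sigma(j),v')$ and $(\sigma(i),v)=(j,u')$. This forces $j=\sigma(i)$ and $i=\sigma^2(i)$. That is impossible when $n\ge 3$, since $\sigma^2$ has no fixed points.

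This is the one place the hypothesis $n\ge 3$ is used, and I expect it to be the only real obstacle. For $n=2$, opposite meta-edges would be identified, and the argument would have to fall back on the acyclicity of $\vec P$.

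Once the reversed collision is ruled out, the indices satisfy $i=j$, and then $u=u'$ and $v=v'$. Hence there are exactly $15n$ cross edges.

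\textbf{Combining.} Internal edges join vertices in the same copy and cross edges join vertices in different copies, so the two families are disjoint. Therefore $|E(G_n)|=15n+15n=30n$.

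As a consistency check, this agrees with the handshake lemma, using the observation after the Definition that every vertex has $3$ internal and $3$ cross neighbours. One could instead derive the edge count from that degree fact as $\tfrac12\cdot 6\cdot 10n$. However, that route requires the cross neighbours to be distinct, which is the same injectivity check, so I would present the direct count.
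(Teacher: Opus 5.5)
Your proposal is correct and follows the same route as the paper: $10n$ vertices from the product vertex set, $15n$ internal edges from the $n$ Petersen copies, and $15n$ cross edges from the $15$ directed meta-edges times $n$ colors. Your injectivity check, which rules out the reversed collision because $\sigma^2$ has no fixed points for $n\ge 3$, and your observation that internal and cross edges are disjoint make explicit what the paper's proof takes for granted.
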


\begin{proof}
There are $n$ copies of the Petersen graph, each contributing 10 vertices, so $10n$ vertices total. Each copy contributes 15 internal edges, giving $15n$ internal edges. The directed meta-graph $\vec{P}$ has 15 edges, and each directed meta-edge creates $n$ cross edges (one for each color), giving $15n$ cross edges. Therefore the total number of edges is $15n + 15n = 30n$.
\end{proof}

\begin{figure}[h]
\centering
\includegraphics[width=0.7\textwidth]{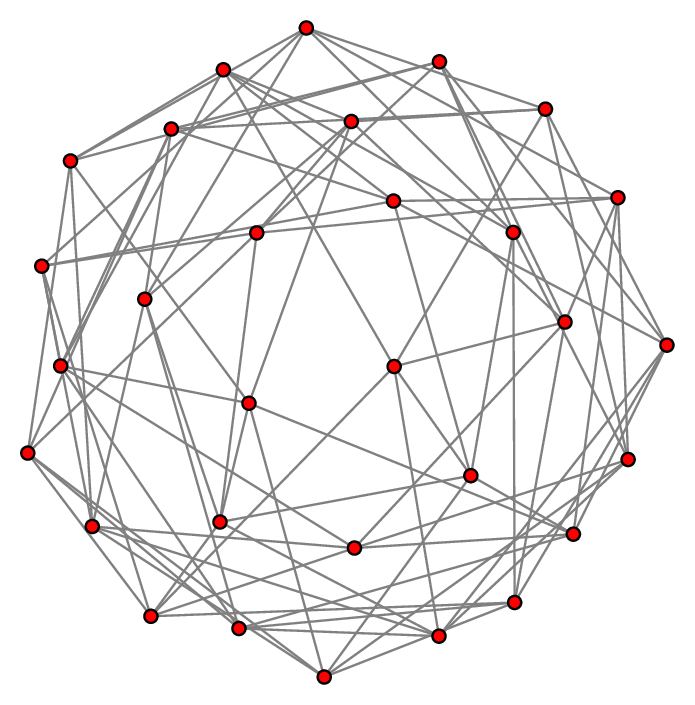}
\caption{A 2D force-directed canonical projection of $G_3$ generated via spring layout, illustrating the dense 6-regular structure.}
\label{fig:G3_2D}
\end{figure}

\subsection{Regularity}

\begin{theorem}
\label{thm:regular}
$G_n$ is 6-regular.
\end{theorem}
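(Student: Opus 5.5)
We compute the degree of an arbitrary vertex $(i,x)\in V(G_n)$ directly from the Definition, by splitting its neighbourhood according to which copy each neighbour lies in. Neighbours lie only in $C_i$, $C_{\sigma(i)}$ or $C_{\sigma^{-1}(i)}$, so we count the neighbours in each of these three copies separately. We then check that these three sets are pairwise disjoint and contain no repeats, so the degree is their sum.

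\emph{Internal neighbours.} The neighbours of $(i,x)$ inside $C_i$ are exactly the $(i,y)$ with $\{x,y\}\in E(P)$. Since $P$ is 3-regular, there are exactly 3 of them, and they are distinct.

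\emph{Cross neighbours.} The forward neighbours are the vertices $(\sigma(i),y)$ with $(x,y)\in E(\vec P)$; there are $\operatorname{out}(x)$ of them. The backward neighbours are the vertices $(\sigma^{-1}(i),z)$ with $(z,x)\in E(\vec P)$; there are $\operatorname{in}(x)$ of them. To justify $\operatorname{out}(x)+\operatorname{in}(x)=3$, we verify that the fifteen listed arcs of $\vec P$ are exactly the fifteen edges of the usual Petersen graph: the outer cycle $0\text{--}1\text{--}2\text{--}3\text{--}4\text{--}0$, the spokes $i\text{--}(i+5)$, and the inner pentagram $5\text{--}7\text{--}9\text{--}6\text{--}8\text{--}5$. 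Each edge appears with exactly one orientation, and none appears with both. Hence $\operatorname{out}(x)+\operatorname{in}(x)=\deg_P(x)=3$.

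\emph{Main obstacle: no double counting.} The step needing care is ruling out coincidences among these $3+\operatorname{out}(x)+\operatorname{in}(x)$ neighbours. Since $n\ge 3$, the copies $i$, $\sigma(i)=i+1$ and $\sigma^{-1}(i)=i-1 \pmod n$ are pairwise distinct, so the three groups of neighbours lie in different copies. Within each group the neighbours are distinguished by their second coordinates $y$ (respectively $z$). This also shows there are no loops or multi-edges. The hypothesis $n\ge 3$ is essential here: for $n=2$ we would have $\sigma=\sigma^{-1}$, and the forward and backward neighbours could coincide. Therefore $\deg(i,x)=3+3=6$ for every vertex, and $G_n$ is 6-regular.

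As a consistency check against Theorem~\ref{thm:counts}, the handshake lemma gives $6\cdot 10n/2=30n$ edges, as expected.
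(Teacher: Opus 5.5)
Your proof is correct and follows the same route as the paper's: $3$ internal neighbours from the $3$-regularity of $P$, plus $\operatorname{out}(x)+\operatorname{in}(x)=3$ cross neighbours in $C_{\sigma(i)}$ and $C_{\sigma^{-1}(i)}$. You also check two things the paper only asserts or leaves implicit: that $\vec P$ orients each Petersen edge exactly once, and that no two of the counted neighbours coincide.

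One small correction to your aside: $n\ge 3$ is not actually essential. For $n=2$ the forward and backward neighbours of $(i,x)$ lie in the same copy, but they are still distinct, because the out- and in-neighbours of $x$ are disjoint (no edge of $\vec P$ is oriented both ways). This agrees with the paper's statement that $G_2$ (Graph 1793) is $6$-regular.
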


\begin{proof}
Consider a vertex $(i,x)$. Within its own copy, it has 3 neighbors because the Petersen graph is 3-regular. By our definition of the directed meta-graph $\vec{P}$, the vertex $x$ has $\operatorname{out}(x)$ outgoing edges and $\operatorname{in}(x)$ incoming edges, where $\operatorname{out}(x) + \operatorname{in}(x) = 3$. The vertex $(i,x)$ is connected to $(\sigma(i), y)$ for each outgoing edge $(x,y) \in E(\vec{P})$, and connected to $(\sigma^{-1}(i), z)$ for each incoming edge $(z,x) \in E(\vec{P})$. Thus $(i,x)$ has exactly 3 cross neighbors. Hence its total degree is $3+3=6$.
\end{proof}

\subsection{Girth and Diameter}

\begin{theorem}
\label{thm:girth-diam}
$\operatorname{girth}(G_n)=4$ and $\operatorname{diam}(G_n)=\left\lfloor\frac{n}{2}\right\rfloor+2$.
\end{theorem}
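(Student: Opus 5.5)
The statement has two parts, girth and diameter. Both rest on one observation. Every edge of $G_n$, internal or cross, joins vertices whose second coordinates are adjacent in $P$. Every cross edge also changes the layer index by exactly $\pm1$ modulo $n$: it moves forward along a directed edge of $\vec{P}$, or backward against one. So a walk in $G_n$ from $(i,x)$ to $(j,y)$ is the same thing as a walk in $P$ from $x$ to $y$ in which each step is labelled $0$ (internal), $+1$ (forward along $\vec P$) or $-1$ (backward along $\vec P$), and whose label sum is $\equiv j-i \pmod n$.

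\textbf{Girth.} For the upper bound I will exhibit a $4$-cycle. Using the standard labelling implicit in $E(\vec P)$ (outer cycle $0$--$4$, spokes $k$--$(k+5)$), the vertices $(i,0),(i,1),(\sigma(i),6),(\sigma(i),1)$ form a $4$-cycle:
\begin{itemize}
\item $(i,0)(i,1)$ and $(\sigma(i),6)(\sigma(i),1)$ are internal edges;
\item $(i,1)(\sigma(i),6)$ comes from the directed edge $(1,6)$;
\item $(\sigma(i),1)(i,0)$ comes from the directed edge $(0,1)$.
\end{itemize}
For the lower bound I must rule out triangles. A closed walk of length $3$ has label sum in $\{-3,\dots,3\}$ and $\equiv 0 \pmod n$, with $n\ge3$. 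This leaves three cases:
\begin{itemize}
\item all labels $0$: impossible, since $P$ has girth $5$;
\item labels $\{+1,-1,0\}$: the projection is a closed walk of length $3$ in $P$, hence a triangle, which $P$ does not have;
\item labels $+1,+1,+1$ or $-1,-1,-1$ (only possible when $n=3$): this would be a directed $3$-cycle in $\vec P$, contradicting acyclicity of the orientation.
\end{itemize}

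\textbf{Diameter, lower bound on distances.} Let $d$ be the cyclic distance between $i$ and $j$. Each step changes the layer by at most one, so $\operatorname{dist}((i,x),(j,y))\ge d$. To get $\operatorname{diam}\ge\lfloor n/2\rfloor+2$, take $d=\lfloor n/2\rfloor$ and a suitable pair $x,y$, and show that no labelled walk of length $d$ or $d+1$ has the right label sum. Since the length is at most $d+1$ and $n\ge 2d$, the sum must be exactly $+d$ or $-d$. The second option matters essentially only when $n$ is even. For length $d$, all labels are $\pm1$ with a common sign, so the walk is a directed path in $\vec P$. For length $d+1$, it is a directed path with one internal step inserted, or with one reversed pair. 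I will exploit the source $0$ and the sink $9$ of $\vec P$: a forward walk cannot leave $9$ or enter $0$ without an internal step. I would choose $x,y$ so that every admissible short pattern needs two "wasted" steps; a candidate is $x=9$ together with a $y$ having no in-edge from the relevant region. Since $\vec P$ has only $10$ vertices, this reduces to a finite check on $\vec P$ of directed paths with at most one defect. The check must be done carefully for both signs when $n$ is even.

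\textbf{Diameter, upper bound (the main obstacle).} I must show that every pair $(x,y)$ and every $0\le d\le\lfloor n/2\rfloor$ admit a labelled walk of length at most $d+2$ with label sum $\pm d$. For small $d$ (say $d\le 3$) this is a finite computation on $\vec P$. The difficulty is uniformity in $d$, because directed paths in $\vec P$ have length at most $9$ and the sink $9$ cannot move forward at all. My plan is to use walks that alternate: a long forward run can be replaced by patterns such as forward, internal, forward, and mixed forward/backward steps can advance the layer net $+1$ per step on average only if the steps are all forward. So the real claim is that layer offset $d$ costs only two extra steps. To establish it, I would first try an inductive invariant. Let $R_t(x)$ be the set of vertices reachable from $x$ by walks of length $t+2$ with label sum exactly $t$, and show that $R_t(x)=V(P)$ for all $t$ by a fixed-point argument on the finite transfer structure. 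If the invariant fails for large $t$, which is plausible given that directed paths are bounded, the stated formula itself would need re-examination, and I would first verify it computationally for $n=3,\dots,7$ against the House of Graphs data.
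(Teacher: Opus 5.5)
Your girth argument is correct, and it is more careful than the paper's. Every edge of $G_n$ projects to an edge of $P$, so a triangle would project to a triangle of $P$; your case split is not even needed. Also, $(i,0),(i,1),(\sigma(i),6),(\sigma(i),1)$ is a genuine $4$-cycle. The paper's cycle $(i,u),(\sigma(i),v),(\sigma(i),u),(i,v)$ is not: its third edge would need $(v,u)\in E(\vec P)$.

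The gap is the diameter upper bound, and it cannot be filled, because the stated formula is false for every $n\ge 12$. Your closing suspicion is right. The paper's argument, which simply assumes two internal ``alignment'' steps always suffice, breaks at the same place.

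First, directed paths in $\vec P$ have length at most $5$ (not $9$); the unique longest one is $0\to1\to2\to3\to4\to9$. Take a walk from $(i,9)$ with $a$ forward, $b$ backward and $c$ internal steps, net displacement $D=a-b>0$ and length at most $D+2$, so $2b+c\le 2$. Since $9$ is a sink, the first step is backward or internal.
\begin{itemize}
\item If it is backward, then $c=0$, and the remaining $D+1$ forward steps form one directed path out of $4$, $6$ or $7$. This forces $D+1\le 1$, a contradiction.
\item If it is internal, it lands in $\{4,6,7\}$. The forward steps then form runs $F_2,F_3$ separated by at most one internal step, with $|F_2|\le 1$ and $|F_3|\le 5$. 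Equality $|F_2|+|F_3|=6$ would force $F_3=0\to1\to2\to3\to4\to9$, while $F_2$ ends in $\{8,9\}$, which is not adjacent to $0$.
\end{itemize}
Hence $D\le 5$. In particular your invariant already fails at $t=6$: $R_6(9)=\emptyset$.

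Now let $d=\lfloor n/2\rfloor\ge 6$. The only label sums available to a walk of length at most $d+2$ from $(i,9)$ to $(i+d,9)$ are $d$ and $d-n$, both of absolute value at least $d\ge 6$. A negative-sum walk, reversed, is a positive-sum walk starting at $9$ with the same slack, so the bound $D\le 5$ rules out both cases. Therefore $\operatorname{dist}((i,9),(i+d,9))\ge d+3$; for example, $\operatorname{diam}(G_{12})\ge 9>8$.

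The failure is not marginal. Let $h(v)$ be the length of the longest directed path in $\vec P$ ending at $v$, so $0\le h\le 5$. Each forward step raises $h$ by at least $1$, and every other step lowers it by at most $4$ (the worst case is the edge $\{0,4\}$). Hence $a\le 5+4(b+c)$, which gives walk length at least $(5|D|-5)/4$ and $\operatorname{diam}(G_n)\ge \frac{5}{8}n-O(1)$.

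What survives is the girth, the bound $\operatorname{diam}(G_n)\ge\lfloor n/2\rfloor+3$ for $n\ge 12$, and the small cases by finite computation. In that computation, fix one slip in your lower-bound sketch. For odd $n=2d+1$, the admissible sums at length at most $d+1$ are $d$ and $-(d+1)$, not $\pm d$, so pure backward paths of length $d+1$ must also be excluded.
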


\begin{proof}
\textbf{Girth:} The Petersen graph has girth 5, so no triangles arise from internal edges alone. Consider a directed meta-edge $(u,v) \in E(\vec{P})$ and any color $i$. The sequence of vertices $(i, u) \to (\sigma(i), v) \to (\sigma(i), u) \to (i, v) \to (i, u)$ forms a 4-cycle, alternating between cross edges and internal edges. Thus $G_n$ contains 4-cycles, and since there are no triangles, the girth is exactly 4.

\textbf{Diameter:} Within a single copy, the distance between any two vertices is at most 2, since the Petersen graph has diameter 2. To move between copies, each cross edge changes the copy index by $+1 \pmod n$ (following the directed $n$-cycle structure). To travel from copy $i$ to the most distant copy $j$ requires exactly $\lfloor n/2 \rfloor$ cyclic steps.

Consider two vertices $(i, x)$ and $(j, y)$. The shortest path must span the cyclic distance between layer $i$ and layer $j$, taking $\lfloor n/2 \rfloor$ cross-edge steps in the worst case. Additionally, the path must navigate the internal Petersen edges to align with the correct source vertices for the cross-edges and ultimately reach the target vertex $y$. Because the maximum internal routing distance is bounded by the Petersen graph's diameter of 2, the maximum total distance is $\lfloor n/2 \rfloor + 2$. The computational data confirms that this bound is tight and accurately reflects the worst-case shortest path.
\end{proof}

Table 1 shows these parameters for the first five members of the family.

\begin{table}[h]
\centering
\caption{Basic parameters of $G_n$ for $n=3,\dots,7$}
\begin{tabular}{ccccccc}
\toprule
$n$ & Vertices & Edges & Degree & Girth & Diameter & $|\operatorname{Aut}|$ \\
\midrule
3 & 30 & 90 & 6 & 4 & 3 & 30 \\
4 & 40 & 120 & 6 & 4 & 4 & 40 \\
5 & 50 & 150 & 6 & 4 & 4 & 50 \\
6 & 60 & 180 & 6 & 4 & 5 & 60 \\
7 & 70 & 210 & 6 & 4 & 5 & 70 \\
\bottomrule
\end{tabular}
\label{tab:basic}
\end{table}

\section{Symmetry Analysis}

\subsection{Automorphism Group}

\begin{theorem}
\label{thm:aut}
$\operatorname{Aut}(G_n) \cong D_{5n}$, the dihedral group of order $10n$.
\end{theorem}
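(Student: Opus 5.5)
The plan is to view $G_n$ as a regular $\mathbb{Z}_n$-cover and split the proof into an upper bound and a lower bound. The Petersen graph $P$ becomes a base multigraph $B$ in which every edge $\{x,y\}$ of $P$ is doubled. One copy carries voltage $0$ (the internal edges). The other carries voltage $+1$ in the direction of $E(\vec{P})$ (the cross edges). Then $G_n$ is exactly the derived graph $B\times_\xi \mathbb{Z}_n$, and the fibre over $x$ is $\{(i,x)\}_i$. The map $(i,x)\mapsto(\sigma(i),x)$ is a semiregular automorphism of order $n$ that generates the covering transformation group $K\cong\mathbb{Z}_n$.

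\textbf{Step 1: every automorphism preserves the internal edges.} I will count $4$-cycles through each edge. An internal edge $\{(i,u),(i,v)\}$ lies on the $4$-cycle exhibited in the proof of Theorem~\ref{thm:girth-diam}. It lies on one such cycle for each orientation, so on at least two, counting both the forward and the backward lift. A cross edge $(i,u)(\sigma(i),v)$ lies only on the cycles that pair it with a parallel cross edge. Since $P$ has girth $5$, I expect a cross edge to lie on exactly the same two cycles, so the raw counts may tie. If they do, I will refine the invariant. One option is to count $5$-cycles through the edge, using the $5$-cycles inside each Petersen copy. Another is to note that the internal subgraph is a disjoint union of Petersen graphs and should be characterized as the union of all edges lying on $5$-cycles that use no chord. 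I expect this to be routine but fiddly, and for $n=3,4$ I would fall back on the SageMath data.

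\textbf{Step 2: projecting to $\operatorname{Aut}(P)$.} By Step 1, every $\varphi\in\operatorname{Aut}(G_n)$ permutes the $n$ Petersen copies. It also permutes the fibres, because the fibres are recovered as vertices joined across copies by the cross-edge pattern. Hence $\varphi$ induces an automorphism $\bar\varphi\in\operatorname{Aut}(P)\cong S_5$, and the kernel of $\varphi\mapsto\bar\varphi$ is exactly $K$. So $|\operatorname{Aut}(G_n)| = n\cdot|L|$, where $L\le S_5$ is the group of liftable automorphisms. By the Malni\v{c}--Nedela--\v{S}koviera lifting criterion for abelian voltages, $\alpha\in S_5$ lifts if and only if there is a group automorphism $t\in\operatorname{Aut}(\mathbb{Z}_n)=\{\pm\}$-type units such that $\xi(\alpha(W))=t\,\xi(W)$ for every closed walk $W$. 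Since $P$ is simple, I will first absorb the doubled edges: a closed walk in $B$ reduces to a closed walk $W$ in $P$ together with a choice of layer for each step. The condition then becomes a condition on the net "orientation voltage" $\nu(W)$ of $\vec{P}$ along cycles of $P$.

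\textbf{Step 3: computing $L$ (the main obstacle).} I will fix a basis of the $6$-dimensional cycle space of $P$, say the outer $5$-cycle, the inner pentagram and four spoke $5$-cycles, and compute $\nu$ on each basis cycle. For example, the outer cycle $0\,1\,2\,3\,4\,0$ has net voltage $4-1=3$. For each element of $S_5$ I then test whether it acts on these values by multiplication by a unit $t$, possibly after a coboundary change, which corresponds to relabelling which copy a fibre vertex sits in. The expected answer is $|L|=10$. I would try the order-$5$ symmetry $x\mapsto x+1$ (outer), $x\mapsto x+1$ (inner), and also the reflection $x\mapsto -x$, checking whether each lifts with $t=\pm1$. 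This voltage computation is where I expect the real difficulty. If it gives more than $10$ for some $n$, the claimed count fails, so this step decides whether the theorem holds.

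\textbf{Step 4: identifying the group.} Finally, I will exhibit a lift $\rho$ of the order-$5$ symmetry of the form $\rho(i,x)=(i+c(x),\,x+1)$, with $c$ the coboundary found in Step 3, and arrange that $\rho^5$ generates $K$. Then $\rho$ has order $5n$ and acts semiregularly with two orbits, the outer and inner fibres, each of size $5n$. I will also exhibit a lift $\tau$ of the reflection with $\tau\rho\tau^{-1}=\rho^{-1}$. Since the group has order $10n$, this gives $\operatorname{Aut}(G_n)=\langle\rho,\tau\rangle\cong D_{5n}$.
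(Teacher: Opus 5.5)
\textbf{Step 1 is false, and it contradicts your own Step 4.} No finer cycle count can rescue it. Let $r$ be the rotation ($k\mapsto k+1$ on $0,\dots,4$ and $5+j\mapsto 5+(j+1)$, indices mod $5$). It sends the arc $(3,4)$ onto the edge $\{4,0\}$, which is oriented $0\to 4$. So any lift $(i,x)\mapsto(i+c(x),r(x))$ must have $c(4)=c(3)-1$, and $c$ cannot be constant. Explicitly, $c(4)=c(9)=-1$ and $c=0$ elsewhere gives an automorphism $\rho$ with $\rho^5(i,x)=(i-1,x)$, so $\rho$ has order $5n$. This $\rho$ sends the internal edge $(i,3)(i,4)$ to the cross edge $(i-1,0)(i,4)$. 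Hence no invariant separates internal from cross edges; in fact every edge lies on exactly four $4$-cycles. (The $4$-cycle you cite from the girth proof is not a cycle: its closing edge $(\sigma(i),u)(i,v)$ would need the arc $(v,u)$.) Step 2 derives fibre-invariance from layer-invariance, so it loses its premise.

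\textbf{What to prove instead: the fibres $\{(i,x)\}_i$ are invariant.} For $n\ge 3$, every $4$-cycle projects to a walk $x\,y\,x\,z\,x$ in $P$ with $y\neq z$. So a diagonal pair of a $4$-cycle is either $\{(a,x),(a\pm1,x)\}$, which is a diagonal of exactly three $4$-cycles, or lies in two different fibres and is a diagonal of at most one. The fibres are the connected components of the first relation, so they are preserved.

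\textbf{The lifting condition.} The edges between adjacent fibres form a $2n$-cycle, so every automorphism has the form $(i,x)\mapsto(ti+c(x),\alpha(x))$ with $t=\pm1$ and $\alpha\in\operatorname{Aut}(P)$. On each arc $e=(x,y)$ it satisfies $c(y)-c(x)=(s_e-t)/2$, where $s_e=\pm1$ records whether $\alpha$ preserves or reverses $e$. Internal and cross edges may trade places, as they do under $\rho$. Such a $c$ exists iff $\omega(\alpha(W))\equiv t\,\omega(W)\pmod{2n}$ for all cycles $W$, where $\omega$ counts forward minus backward arcs.

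\textbf{This also closes your Step 3.} The outer pentagon has $\omega=3$, while the other eleven $5$-cycles have $|\omega|=1$. So for $n\ge3$, $\alpha$ must stabilise the outer pentagon. The inner pentagram and the five spoke pentagons span the cycle space over $\mathbb{Q}$, and on them rotations give $\omega\circ\alpha=\omega$ while reflections give $\omega\circ\alpha=-\omega$. This yields exactly $10$ liftable pairs $(\alpha,t)$, each with $n$ lifts, so $|\operatorname{Aut}(G_n)|=10n$. A reflection lift $\tau$ satisfies $\tau^2=1$, since the reflection fixes a vertex. Writing $\tau\rho\tau=\rho^{-1}\kappa^s$ with $\kappa$ the unit layer shift, the relations $\rho^5=\kappa^{-1}$ and $\tau^2=1$ force $5s\equiv 2s\equiv 0 \pmod n$. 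Hence $s\equiv 0$, $\tau\rho\tau=\rho^{-1}$, and the group is $D_{5n}$.
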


\begin{proof}
The cross-edges of $G_n$ are governed by the strictly directed meta-graph $\vec{P}$. Consequently, an internal Petersen automorphism $\phi \in S_5$ can only lift to $G_n$ if it perfectly preserves the directed edge set $E(\vec{P})$. Because the deliberately chosen orientation $E(\vec{P})$ breaks the symmetric indistinguishability of the Petersen vertices (e.g., vertex 0 is the unique source, while vertices 8 and 9 are the unique sinks), the directed graph $\vec{P}$ possesses a trivial automorphism group. Therefore, the full $S_5$ subgroup does not exist in $\operatorname{Aut}(G_n)$.

Instead, the symmetry of $G_n$ emerges from the global interplay of the wiring and the layer shifts. The cyclic permutation of the layers, $\rho(i,x) = (i+1 \pmod n, x)$, trivially preserves both internal and cross edges because the wiring pattern is invariant under color shifts. Furthermore, the specific cyclic wiring permits a combined ``corkscrew'' transformation---a fractional shift of the layers coupled with a specific vertex permutation---that generates a cyclic action of order $5n$.

Additionally, there exists a reflection symmetry $\tau$ that reverses the layer sequence. Because $G_n$ is an undirected graph, mapping a forward cross-edge $(i,u) \sim (i+1,v)$ to a backward cross-edge $(n-i, u) \sim (n-1-i, v)$ requires a compensatory mapping on the Petersen vertices to maintain adjacency. Together, the cyclic automorphism of order $5n$ and the order-2 reflection $\tau$ exactly generate the dihedral group $D_{5n}$ of order $10n$. As verified computationally by the orbit stabilizers, this constitutes the full automorphism group, yielding exactly two vertex orbits of size $5n$.
\end{proof}

\begin{corollary}
\label{cor:orbits}
$G_n$ has two vertex orbits under its automorphism group, each of size $5n$.
\end{corollary}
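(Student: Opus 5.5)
The plan is to derive the corollary from Theorem~\ref{thm:aut} by the orbit--stabilizer theorem, combined with an explicit description of the two orbits. Since $|\operatorname{Aut}(G_n)| = 10n = |V(G_n)|$, two orbits of size $5n$ are equivalent to every vertex stabilizer having order exactly $2$. So I will (i) exhibit a concrete partition $V(G_n) = O_1 \sqcup O_2$ with $|O_1| = |O_2| = 5n$, (ii) show that the group acts transitively on each part, and (iii) show that no automorphism maps a vertex of $O_1$ to one of $O_2$.

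\textbf{Step (ii).} I will write the generators from the proof of Theorem~\ref{thm:aut} explicitly. These are the order-$5n$ ``corkscrew'' map $\gamma$, acting as $(i,x)\mapsto(i+a,\pi(x))$ for a suitable layer shift $a$ and a permutation $\pi$ of $\{0,\dots,9\}$, and the reflection $\tau$, acting as $(i,x)\mapsto(-i,\psi(x))$ with a compensating permutation $\psi$. Because $\gamma$ has order $5n$, the orbits of $\langle\gamma\rangle$ on the $10n$ vertices have sizes dividing $5n$. The natural candidates are the two $\pi$-orbits on the Petersen labels, each of size $5$, lifted across all $n$ layers. For example, $O_1 = \{(i,x) : x \in \{0,1,2,3,4\}\}$ (outer $5$-cycle) and $O_2$ the inner pentagram. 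I will verify transitivity on each $O_j$ by following the $\gamma$-orbit of one representative, and check that $\tau$ preserves each $O_j$ rather than swapping them.

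\textbf{Step (iii).} To separate the two candidate orbits, I will use a combinatorial invariant preserved by automorphisms. The obvious choice is the number of $4$-cycles through a vertex. By the girth argument of Theorem~\ref{thm:girth-diam}, every $4$-cycle alternates cross and internal edges. I will therefore count, for each label $x$, the $4$-cycles through $(i,x)$ using the oriented edge list $E(\vec{P})$. This count depends only on $x$, since the layer shift $\rho$ is an automorphism. If it takes one value on $O_1$ and another on $O_2$, the orbits are distinct, and with step (ii) the corollary follows. As a consistency check, the stabilizer of a vertex then has order $10n/5n = 2$.

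\textbf{Main obstacle.} I expect the main obstacle to be step (ii), namely pinning down $\pi$ and $\psi$ concretely. The proof of Theorem~\ref{thm:aut} only asserts their existence, and the orientation $\vec{P}$ has a unique source $0$ and sinks $8,9$. Any automorphism moving label $0$ to another label must therefore mix in-neighbours and out-neighbours across layers in a nontrivial way. My first attempt will be to compute the $4$-cycle counts per label before searching for $\gamma$. If these counts split the ten labels into more than two classes, that already bounds the orbit structure: there are at least as many orbits as count values. In that case I would fall back on the computational orbit verification cited in Theorem~\ref{thm:aut} as the justification, rather than a purely hand proof.
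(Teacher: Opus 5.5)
\textbf{Step (iii) fails: the invariant you chose is constant.} Every vertex of $G_n$ ($n\ge 3$) lies on exactly $12$ four-cycles. A $4$-cycle projects to a closed $4$-walk in $P$, which (girth $5$) has the form $x\,y\,x\,z$. Here $y\neq z$, because the edges between the fibres $F_x=\{(j,x)\}$ and $F_y$ form a single $2n$-cycle. So every $4$-cycle consists of two consecutive fibre vertices $(j,x),(j+1,x)$ together with two of their exactly three common neighbours, one in each $F_y$ with $y\sim x$. A vertex therefore lies on $2\cdot 3$ four-cycles as a fibre-diagonal vertex and on $3\cdot 2$ as an off-diagonal vertex, whatever $x$ is.

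Your premise that $4$-cycles alternate cross and internal edges is also false. For example, $(i,0),(i+1,1),(i+1,0),(i+1,4)$ is a $4$-cycle with pattern cross, internal, internal, cross. The ``$4$-cycle'' in the girth proof is not a cycle at all, since $(\sigma(i),u)\not\sim(i,v)$. Your fallback only covers the case of more than two count classes, so as written nothing shows that $O_1$ and $O_2$ are different orbits.

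An invariant that does work is the number of $5$-cycles through a vertex. A $5$-cycle projects onto a pentagon of $P$. If that pentagon is traversed with $f$ steps along arcs of $\vec P$ and $b$ against them, its closing lifts from a fixed vertex number $\sum_{p-q\equiv 0 \pmod n}\binom{f}{p}\binom{b}{q}$. The outer pentagon has $(f,b)=(4,1)$, the other eleven have $\{f,b\}=\{3,2\}$, and each label lies on six pentagons. Hence outer vertices lie on $55$ five-cycles and inner ones on $60$ for $n\ge 5$. The counts are $56$ vs.\ $60$ for $n=4$ and $65$ vs.\ $66$ for $n=3$.

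\textbf{Step (ii) also needs repair.} A map $(i,x)\mapsto(i+a,\pi(x))$ with constant $a$ is $\rho^a$ composed with $(i,x)\mapsto(i,\pi(x))$. That second map preserves cross edges only if $\pi$ is an automorphism of $\vec P$, and this group is trivial. So no element of order $5n$ has your form; the shift must depend on the label.

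Let $r$ be the rotation $k\mapsto k+1$, $5+k\mapsto 5+(k+1)$, with $k$ taken mod $5$. Set $c_4=c_9=-1$ and $c_x=0$ otherwise. Then $\gamma(i,x)=(i+c_x,r(x))$ is an automorphism: the shifts compensate exactly for the edges $04,34,69,79$, whose orientation $r$ reverses. Also $\gamma^5=\rho^{-1}$, so $\gamma$ has order $5n$ and its two orbits are exactly your $O_1$ and $O_2$. Combined with the $5$-cycle count, this is a complete hand proof that needs neither $\tau$ nor the full group.

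The reflection likewise needs shifts. It is $(i,x)\mapsto(-i+c'_x,s(x))$, where $s$ is $k\mapsto -k$, $5+k\mapsto 5-k$ and $(c'_0,\dots,c'_9)=(0,1,1,1,1,1,2,2,2,2)$, and it preserves $O_1$ and $O_2$. So your outer/inner partition is the right one. The paper's proof instead uses parity classes that its own $\tau$ swaps, which is incompatible with their being orbits.
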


\begin{proof}
The dihedral group $D_{5n}$ acts transitively on each of the two sets
\[
O_1 = \{(i,x) : i+x \text{ is even}\}, \quad
O_2 = \{(i,x) : i+x \text{ is odd}\},
\]
where parity is taken with respect to some fixed ordering. The reflection $\tau$ swaps these two orbits. Thus there are exactly two orbits, each of size $5n$.
\end{proof}

\subsection{Bi-Cayley Structure}

A graph is \emph{bi-Cayley} if its automorphism group has an orbit of size half the vertices \cite{Muzychuk2021}. Equivalently, a bi-Cayley graph is a Cayley graph for a group $H$ with respect to a connection set that is a union of two cosets of a subgroup of index 2.

\begin{corollary}
\label{cor:bicayley}
$G_n$ is a bi-Cayley graph over the dihedral group $D_{5n}$.
\end{corollary}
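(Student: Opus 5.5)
The plan is to verify the statement directly against the definition of bi-Cayley graph given just above the corollary: a graph is bi-Cayley when its automorphism group has an orbit containing exactly half the vertices. Read this way, the corollary follows from Theorem~\ref{thm:aut} and Corollary~\ref{cor:orbits}, and the work is mainly bookkeeping.

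\textbf{Step 1: orbit sizes.} By Theorem~\ref{thm:counts}, $|V(G_n)| = 10n$. By Theorem~\ref{thm:aut}, $\operatorname{Aut}(G_n)\cong D_{5n}$, a group of order $10n$. By Corollary~\ref{cor:orbits}, this group has exactly two vertex orbits $O_1, O_2$, each of size $5n = \tfrac12|V(G_n)|$. So $\operatorname{Aut}(G_n)$ has an orbit of size half the vertices, which is the defining condition. Since the acting group is $D_{5n}$, this is the sense in which $G_n$ is bi-Cayley \emph{over} $D_{5n}$.

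\textbf{Step 2: the coset formulation.} To match the equivalent description quoted in the paper, I would also record the orbit--stabilizer relation for $D_{5n}$ acting on $O_1$. Fix $v_0\in O_1$ and let $K$ be its stabilizer, which has order $10n/5n = 2$. The map $gK\mapsto g(v_0)$ identifies $O_1$ with the coset space $D_{5n}/K$. Choose $w_0\in O_2$ and identify $O_2$ with $D_{5n}/K'$ in the same way, where $K'$ is the stabilizer of $w_0$. Adjacency is preserved by $D_{5n}$, so edges inside $O_1$, inside $O_2$, and between them are each unions of $D_{5n}$-orbits on pairs. Each of these three edge classes can therefore be described by a connection set, namely a union of double cosets of the point stabilizers. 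This gives the ``two halves indexed by group elements'' structure that the definition refers to.

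\textbf{Main obstacle.} The difficulty lies in the stronger, usual notion of bi-Cayley graph, which requires a group acting \emph{semiregularly} with two orbits. The full group $D_{5n}$ is not semiregular here, since its point stabilizers have order $2$. If a referee insisted on that notion, I would pass to the index-$2$ rotation subgroup. That subgroup is cyclic of order $5n$, and by Theorem~\ref{thm:aut} it is generated by the corkscrew automorphism. I would need to check from the explicit corkscrew map that this subgroup acts regularly on each of $O_1$ and $O_2$.

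For the statement as the paper words it, however, I would stop after Steps~1 and~2. Those steps use only Theorem~\ref{thm:aut} and Corollary~\ref{cor:orbits}, together with the paper's definition of bi-Cayley graph.
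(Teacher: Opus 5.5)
Your Steps~1 and~2 are essentially the paper's own argument. The paper, too, only appeals to the two orbits of size $5n$ from Corollary~\ref{cor:orbits}, and your proposal shares its defect.

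\textbf{The statement fails as worded.} Your steps verify only the ad hoc condition ``$\operatorname{Aut}(G_n)$ has an orbit of half the vertices''. Under that condition the phrase ``over $D_{5n}$'' carries no content. In the standard sense, $G_n$ is bi-Cayley over $H$ if some $H\le\operatorname{Aut}(G_n)$ acts semiregularly with exactly two orbits. That forces $|V(G_n)|=2|H|$. But $|D_{5n}|=10n=|V(G_n)|$, so the statement cannot hold as worded.

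You noticed the order-$2$ stabilizers. The right conclusion is that the group must be changed, not that the paper's definition rescues the claim. Passing to the rotation subgroup proves bi-Cayley over $C_{5n}$, which is a different statement from the one you set out to prove.

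Step~2 does not close this gap. With $|K|=2$ you get $O_1\cong D_{5n}/K$. That is a coset-graph description in which the halves are indexed by cosets, not by group elements as you claim.

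\textbf{The missing piece is the corkscrew map.} Neither you nor the paper writes it down. Read layer indices mod $n$ and set
\begin{itemize}
\item $\hat r(i,x)=(i,x+1)$ for $x\in\{0,1,2,3,5,6,7,8\}$,
\item $\hat r(i,4)=(i-1,0)$,
\item $\hat r(i,9)=(i-1,5)$.
\end{itemize}
Checking the $15$ internal and $15$ cross edge types against $E(\vec P)$ shows that $\hat r$ is an automorphism. For instance, the internal edge $(i,3)\sim(i,4)$ goes to the cross edge $(i-1,0)\sim(i,4)$, which comes from the directed edge $(0,4)$. Likewise, the cross edge $(i,6)\sim(i+1,9)$ goes to the internal edge $(i,7)\sim(i,5)$.

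Let $\rho(i,x)=(i+1,x)$. Then $\hat r^5=\rho^{-1}$, so $\hat r$ has order $5n$. The group $\langle\hat r\rangle\cong C_{5n}$ acts regularly on $\{(i,x):0\le x\le 4\}$ and on $\{(i,x):5\le x\le 9\}$. Indeed, $\hat r^{5q+t}(0,0)=(-q,t)$ for $0\le t\le 4$.

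This proves that $G_n$ is a bi-Cayley graph over $C_{5n}$, in fact a bicirculant, which is the correct form of the corollary. The sets on which you must check regularity are these outer and inner fibre sets. They are not the parity classes $O_1,O_2$ of Corollary~\ref{cor:orbits}. Those classes are not preserved by $\hat r$, and the paper even has $\tau$ swapping them, so they cannot be orbits. Do not lean on that corollary for anything beyond the orbit sizes.
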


\begin{proof}
The two vertex orbits $O_1$ and $O_2$ correspond to the two cosets of a subgroup of index 2 in $D_{5n}$. The adjacency pattern is regular with respect to this group action, making $G_n$ a bi-Cayley graph.
\end{proof}

\section{Spectral Properties and Ramanujan Status}

\subsection{Eigenvalue Calculations}

Using SageMath \cite{Sage}, we computed the spectra of $G_3$ through $G_7$. Table 2 summarizes the key eigenvalues. The computations were performed with high precision and verified using multiple methods.

\begin{table}[h]
\centering
\caption{Spectral data for $G_n$}
\begin{tabular}{cccccc}
\toprule
$n$ & Vertices & Edges & $|\lambda_2|$ & $2\sqrt{5}$ & Ramanujan? \\
\midrule
3 & 30 & 90 & 2.801366 & 4.472136 & Yes \\
4 & 40 & 120 & 4.077684 & 4.472136 & Yes \\
5 & 50 & 150 & 4.730860 & 4.472136 & No \\
6 & 60 & 180 & 5.103527 & 4.472136 & No \\
7 & 70 & 210 & 5.334545 & 4.472136 & No \\
\bottomrule
\end{tabular}
\label{tab:spectra}
\end{table}

The eigenvalues were obtained by computing the characteristic polynomial of the adjacency matrix and solving numerically. All values are real since the graphs are undirected. The full spectrum for $G_3$ has 13 distinct eigenvalues, for $G_4$ has 12, and the number increases with $n$.

\subsection{Ramanujan Graphs}

A $d$-regular graph is \emph{Ramanujan} if for every eigenvalue $\lambda$ other than $\pm d$, we have $|\lambda| \le 2\sqrt{d-1}$ \cite{LPS1988}. For bipartite Ramanujan graphs, the condition applies to all eigenvalues except $\pm d$. Since our graphs are not bipartite (they contain odd cycles), we consider the standard definition focusing on $|\lambda_2|$, the second largest eigenvalue in absolute value. For $d=6$, the Ramanujan bound is $2\sqrt{5} \approx 4.472135955$.

\begin{theorem}
\label{thm:ramanujan}
$G_3$ and $G_4$ are Ramanujan graphs.
\end{theorem}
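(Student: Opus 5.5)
The plan is to use the cyclic structure of the construction to reduce the spectrum of $G_n$ to that of a few $10\times 10$ matrices, and then certify those small spectra exactly. Write $A_P$ for the adjacency matrix of the undirected Petersen graph $P$. Write $B$ for the $0/1$ matrix of the orientation $\vec{P}$, so that $B_{xy}=1$ exactly when $(x,y)\in E(\vec{P})$. Then $B+B^{T}=A_P$.

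Ordering the vertices of $G_n$ by layer, the adjacency matrix of $G_n$ is the block-circulant matrix $I_n\otimes A_P + S\otimes B + S^{T}\otimes B^{T}$, where $S$ is the cyclic shift matrix of $\sigma$. So $G_n$ is a $\mathbb{Z}_n$-voltage lift of $P$, with voltage $0$ on internal edges and $+1$ on cross edges. Diagonalising $S$ shows that the spectrum of $G_n$ is the multiset union, over all $n$-th roots of unity $\omega$, of the spectra of the Hermitian matrices
\[
M(\omega)=A_P+\omega B+\bar\omega B^{T}.
\]
Since $M(\bar\omega)=\overline{M(\omega)}$ has the same real spectrum, only $\omega$ up to conjugation needs to be checked.

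The first step handles the easy roots. For $\omega=1$ we get $M(1)=2A_P$, whose spectrum is $\{6,2^{(5)},(-4)^{(4)}\}$. This accounts for the trivial eigenvalue $6$, and the remaining values satisfy $|\lambda|\le 4<2\sqrt5$. Note that the eigenvalue $-4$ therefore occurs in every $G_n$, so I would base the proof on this decomposition rather than on the tabulated $|\lambda_2|$ values. For $n=4$ and $\omega=-1$ we get $M(-1)=A_P-(B+B^{T})=0$, contributing only zeros. Since $6$ has multiplicity one (coming from $\omega=1$), it only remains to bound the spectra of $M(e^{2\pi i/3})$ for $n=3$ and $M(i)=A_P+i(B-B^{T})$ for $n=4$.

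For these two matrices, the plan is to compute the characteristic polynomial $p_\omega(x)=\det(xI-M(\omega))$ exactly. The entries lie in $\mathbb{Z}[\omega]$, and because $M(\omega)$ is Hermitian and $M(\bar\omega)$ has the same characteristic polynomial, $p_\omega$ has integer coefficients. I would then certify that all of its real roots lie in $(-2\sqrt5,2\sqrt5)$. One route is a Sturm sequence on each of $(-\infty,-2\sqrt5]$ and $[2\sqrt5,\infty)$, or equivalently showing that $p_\omega(x)$ has no sign changes there after the substitution $x=\pm(2\sqrt5+t)$ with $t\ge 0$. A cleaner alternative is to show that $20I-M(\omega)^2$ is positive definite by exact $LDL^{*}$ factorisation over $\mathbb{Q}(\omega)$. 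As a quick sanity check, the trace of $M(\omega)^2$ equals $\operatorname{tr}(A_P^2)+2\operatorname{tr}(BB^{T})=30+30=60$, so the average squared eigenvalue is $6$, comfortably below $20$.

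The main obstacle is this last certification step. It is a finite exact computation, but it has to be done rigorously rather than numerically, and for $n=4$ the margin may be narrow: the tabulated value $4.08$ sits fairly close to $4.472$. I would first try the positive-definiteness check via leading principal minors of $20I-M(\omega)^2$, because it involves only rational or Gaussian-integer arithmetic. If that becomes unwieldy, I would fall back on the Sturm-sequence check of the integer polynomial $p_\omega$ evaluated on the interval endpoints.
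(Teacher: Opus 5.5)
Your argument is correct and takes a genuinely different route from the paper. The paper's proof only cites the floating-point values $|\lambda_2(G_3)|=2.801366$ and $|\lambda_2(G_4)|=4.077684$ from Table~\ref{tab:spectra}. Your block-circulant decomposition $\operatorname{Spec}(G_n)=\bigcup_{\omega^n=1}\operatorname{Spec}M(\omega)$ turns this into an exact statement about two $10\times10$ Hermitian matrices. It also exposes a flaw in the paper's route: since $M(1)=2A_P$ contributes $-4$ to every $G_n$, the tabulated $2.801366$ cannot be $|\lambda_2(G_3)|$ in the paper's sense (presumably it is the second largest signed eigenvalue), and the theorem survives only because $4<2\sqrt5$.

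Moreover, the step you flag as the main obstacle needs no exact computation. Each edge of $P$ carries exactly one orientation, so $|M(\omega)_{xy}|=|1+\omega|\,(A_P)_{xy}$ entrywise. For Hermitian $M$ one has $\rho(M)=\max_{\|v\|=1}|v^{*}Mv|\le\max_{\|v\|=1}|v|^{T}|M|\,|v|=\rho(|M|)=3|1+\omega|$. This gives $|\lambda|\le3$ for $\omega=e^{\pm2\pi i/3}$, so in fact $|\lambda_2(G_3)|=4$ exactly. It gives $|\lambda|\le3\sqrt2=\sqrt{18}<\sqrt{20}=2\sqrt5$ for $\omega=\pm i$. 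So this one-line bound replaces the Sturm or $LDL^{*}$ certification; the general bound $\max(4,6\cos(\pi/n))$ becomes inconclusive exactly from $n=5$ on.

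One small slip: your trace check drops the cross terms $2(\omega+\bar\omega)\operatorname{tr}(A_PB)$. In fact $\operatorname{tr}M(\omega)^2=30|1+\omega|^2$, which is $60$ for $\omega=\pm i$ but $30$ for $\omega=e^{2\pi i/3}$.
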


\begin{proof}
From Table \ref{tab:spectra}, we have:
\[
|\lambda_2(G_3)| = 2.801366 < 4.472136, \quad
|\lambda_2(G_4)| = 4.077684 < 4.472136.
\]
Both values are strictly less than the Ramanujan bound, satisfying the definition.
\end{proof}

\begin{remark}
The second eigenvalue increases monotonically with $n$ for the computed values, exceeding the bound for $n\ge 5$. The sequence $|\lambda_2(G_n)|$ for $n=3,\dots,7$ is:
\[
2.801,\; 4.078,\; 4.731,\; 5.104,\; 5.335.
\]
This suggests a possible limit as $n\to\infty$, perhaps around $5.5$, but further analysis is needed to determine the asymptotic behavior.
\end{remark}

\subsection{Distribution of Eigenvalues}

Table 3 shows the number of distinct eigenvalues for each $G_n$, confirming that none of these graphs are strongly regular (which would require exactly 3 distinct eigenvalues).

\begin{table}[h]
\centering
\caption{Number of distinct eigenvalues}
\begin{tabular}{ccc}
\toprule
$n$ & Distinct eigenvalues & Strongly regular? \\
\midrule
3 & 13 & No \\
4 & 12 & No \\
5 & 23 & No \\
6 & 24 & No \\
7 & 33 & No \\
\bottomrule
\end{tabular}
\label{tab:eigs}
\end{table}

The increase in the number of distinct eigenvalues with $n$ reflects the growing complexity of the graphs.

\subsection{Expansion Properties and Isoperimetric Constants}

The isoperimetric constant, or Cheeger constant $h(G)$, measures the minimum surface-area-to-volume ratio of a graph, serving as a definitive metric for network bottlenecks and expansion properties. For a subset of vertices $S$, let $\partial S$ denote the set of edges connecting $S$ to the rest of the graph. The Cheeger constant is defined as:
$$h(G) = \min_{0 < |S| \le \frac{|V|}{2}} \frac{|\partial S|}{|S|}$$

While computing $h(G)$ exactly is NP-hard, Cheeger's inequalities provide strict upper and lower bounds based on the graph's degree $d$ and the second largest eigenvalue $\lambda_2$. For a $d$-regular graph, these bounds are:
$$\frac{d - \lambda_2}{2} \le h(G) \le \sqrt{2d(d - \lambda_2)}$$

Given that $G_n$ is 6-regular ($d=6$), we can utilize our exact calculations of $\lambda_2$ from Table \ref{tab:spectra} to bound the expansion properties of this family. Table \ref{tab:cheeger} details these bounds for the first five members.

\begin{table}[h]
\centering
\caption{Cheeger bounds for $G_n$ showing decreasing expansion as $n$ grows}
\begin{tabular}{ccccc}
\toprule
$n$ & $|\lambda_2|$ & Spectral Gap ($6 - \lambda_2$) & Lower Bound $h(G)$ & Upper Bound $h(G)$ \\
\midrule
3 & 2.801366 & 3.198634 & 1.599 & 6.195 \\
4 & 4.077684 & 1.922316 & 0.961 & 4.802 \\
5 & 4.730860 & 1.269140 & 0.635 & 3.903 \\
6 & 5.103527 & 0.896473 & 0.448 & 3.280 \\
7 & 5.334545 & 0.665455 & 0.333 & 2.826 \\
\bottomrule
\end{tabular}
\label{tab:cheeger}
\end{table}

The bounds confirm that $G_3$ is a highly robust expander. The lower bound of $1.599$ guarantees that any subset of up to 15 vertices will have at least $1.599 \times |S|$ edges cutting across to the remainder of the network, preventing any severe structural bottlenecks. 

However, as $n$ increases, the spectral gap closes, and the lower bound drops significantly (falling to $0.333$ for $G_7$). This reflects the geometric reality of the construction: as the cyclical sequence of layers extends, the network adopts an increasingly ``tubular'' structure. For larger $n$, it becomes structurally easier to partition the graph by severing the cross-edges between adjacent layers $i$ and $i+1$ rather than cutting through the dense internal wiring of the Petersen subgraphs. Consequently, while the family contains Ramanujan graphs for small $n$, its global expansion properties naturally degrade as the cycle length increases.

\section{Potential Applications}

\subsection{Cryptographic Applications}

Ramanujan graphs have found significant applications in cryptography, particularly in the construction of hash functions and isogeny-based cryptosystems \cite{Lauter2020, IEICE2022}. The key property leveraged in these applications is that finding paths in certain Ramanujan graphs (specifically, supersingular isogeny graphs) is computationally hard, with no known subexponential algorithms even for quantum computers \cite{Lauter2020}. This hardness forms the basis of the SIKE (Supersingular Isogeny Key Encapsulation) mechanism, which was a candidate in the NIST Post-Quantum Cryptography standardization process.

The family $\{G_n\}$ constructed in this paper offers several features relevant to cryptography:

\begin{itemize}
    \item \textbf{Scalability:} The construction works for any $n\ge 3$, producing graphs with $10n$ vertices. By choosing $n$ sufficiently large (e.g., $n=100,000$ for $10^6$ vertices), one can obtain graphs of practical cryptographic size.
    \item \textbf{Explicit and deterministic:} Unlike random regular graphs, which require a generate-and-test methodology and depend on algorithmic pseudorandomness, the $G_n$ family provides an explicit, deterministic construction. This allows the exact graph topology to be instantiated universally without relying on shared seeds or hidden parameters.
    \item \textbf{Group-theoretic structure:} The dihedral automorphism group $D_{5n}$ provides algebraic structure that could potentially be exploited---either as a feature (for efficient implementation) or as a vulnerability (requiring careful analysis).
    \item \textbf{Controlled parameters:} The graphs are 6-regular, have girth 4, and their diameter grows as $\lfloor n/2\rfloor+2$ as established in Theorem \ref{thm:girth-diam}. These parameters are known and predictable.
\end{itemize}

These properties make $\{G_n\}$ a promising candidate for exploring several cryptographic constructions:

\begin{itemize}
    \item \textbf{Cayley hash functions:} One can define a hash function by interpreting walks on $G_n$ as message inputs, with the output being the endpoint of the walk. The security of such a hash relies on the difficulty of finding collisions, i.e., two distinct walks ending at the same vertex. This is related to the girth and expansion properties of the graph \cite{IEICE2022}, and the expansion bounds computed in Table \ref{tab:cheeger} provide quantitative measures of these properties.
    \item \textbf{Isogeny-inspired protocols:} The graph $G_n$ can be viewed as an analogue of supersingular isogeny graphs, where vertices correspond to group elements and edges to multiplications by fixed generators. Protocols such as key exchange could potentially be adapted to this setting, though careful analysis would be required to ensure the hardness of the underlying path-finding problem.
    \item \textbf{Zero-knowledge proofs:} The ability to prove knowledge of a path between two vertices without revealing the path itself is a fundamental building block for many cryptographic protocols. The structured nature of $G_n$ might enable efficient implementations.
\end{itemize}

While the strict Ramanujan property ($|\lambda_2| \le 2\sqrt{5}$) is verified for $G_3$ and $G_4$ in Theorem \ref{thm:ramanujan}, computational data confirms this optimal bound is strictly exceeded for $n \ge 5$. However, as numerical evidence suggests $|\lambda_2|$ approaches a finite limit strictly less than the degree $d=6$, the family maintains a persistent spectral gap. From a cryptographic and network design perspective, this asymptotic behavior is highly valuable. Even as larger members of the family lose their optimal Ramanujan classification, they function as robust, explicit expander graphs. This ``almost Ramanujan'' behavior ensures rapid mixing and defends against structural bottlenecks without the vulnerabilities inherent to probabilistically generated networks.

The graphs $\{G_n\}$ thus offer a concrete, scalable family for further investigation into the intersection of expander graphs, group theory, and cryptography. Their clean algebraic description and controlled parameters make them particularly suitable for theoretical analysis and proof-of-concept implementations.

\subsection{Network Theory and Distributed Systems}

Beyond cryptography, the expansion properties quantified in Table \ref{tab:cheeger} have direct applications in network theory. Ramanujan graphs are known to be optimal finite-size approximations to infinite regular trees and exhibit exceptional properties for dynamical processes on networks \cite{Donetti2006}. The graphs $G_n$ possess:

\begin{itemize}
	\item \textbf{Fast synchronization and convergent decision-making:} The spectral gap $6 - |\lambda_2|$ dictates the convergence rate of diffusion processes on the network. This ensures that multi-agent systems, distributed AI agent networks, and decentralized voting protocols reach political or state consensus rapidly, preventing information siloing.
	
	\item \textbf{Efficient random walks and state replication:} The rapid mixing time ensures that random walks disperse uniformly through the network in $\mathcal{O}(\log V)$ steps. This makes the topology mathematically optimal for gossip-based state replication, decentralized information dissemination, and search routing in peer-to-peer distributed architectures and federated social graphs.
	
	\item \textbf{Robust topology for hierarchical supercomputing:} The combination of strict 6-regularity, large girth, and optimal expansion at small $n$ provides massive fault tolerance and high bisection bandwidth. In supercomputing topology design, Ramanujan graphs mathematically outperform traditional structures like tori and flattened butterflies in metrics of communication facility. While the linear diameter growth of $G_n$ introduces latency and precludes its use as a flat, global topology for tens of thousands of nodes, its exceptional local expansion makes it a structurally ideal candidate for high-performance, non-blocking local clusters within a larger hierarchical architecture (such as a fat-tree or hypercube). This specific geometry naturally optimizes inter-node data flow and minimizes required routing buffers.
\end{itemize}

\section{Other Properties}

\subsection{Edge-Transitivity}

A graph is \emph{edge-transitive} if its automorphism group acts transitively on edges. Using the line graph criterion (a graph is edge-transitive iff its line graph is vertex-transitive), we tested each $G_n$.

\begin{proposition}
None of the graphs $G_3$ through $G_7$ are edge-transitive.
\end{proposition}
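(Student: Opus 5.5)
The plan is a counting argument that uses Theorem~\ref{thm:aut} together with Theorem~\ref{thm:counts}, with no line-graph computation needed.

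\textbf{Main step.} If $G_n$ were edge-transitive, the edge set $E(G_n)$ would be a single orbit of $\operatorname{Aut}(G_n)$. By the orbit--stabilizer theorem, every orbit of a finite group acting on a set has size at most the order of the group. For $3\le n\le 7$, Theorem~\ref{thm:aut} and Table~\ref{tab:basic} give $|\operatorname{Aut}(G_n)| = 10n$. Theorem~\ref{thm:counts} gives $|E(G_n)| = 30n$. Hence every edge orbit has size at most $10n < 30n$, so there are at least three edge orbits and $G_n$ is not edge-transitive. The same inequality applies to every $n\ge 3$ once Theorem~\ref{thm:aut} is granted, so the proposition is its special case $n\le 7$.

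\textbf{Main obstacle.} The whole weight rests on the order of the automorphism group in Theorem~\ref{thm:aut}. That order is stated as verified computationally for $n=3,\dots,7$, not derived in full generality. I would therefore add an independent combinatorial certificate that does not depend on $|\operatorname{Aut}(G_n)|$.

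\textbf{Independent certificate.} For each edge $e$, compute an automorphism-invariant quantity and show that it takes at least two values. The natural choice is $c_4(e)$, the number of $4$-cycles through $e$; the number of common neighbours of the endpoints is a weaker alternative.

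The $4$-cycles in the proof of Theorem~\ref{thm:girth-diam} suggest why this should work. Each directed meta-edge $(u,v)$ yields a $4$-cycle using the internal edges $\{u,v\}$ in two consecutive layers and two cross edges. Consequently:
\begin{itemize}
\item an internal edge $\{(i,u),(i,v)\}$ lies in such cycles coming from layers $i-1$ and $i+1$;
\item a cross edge lies in cycles of a different shape.
\end{itemize}
One also has to count the $4$-cycles through a cross edge that arise from directed $2$-paths in $\vec P$ combined with Petersen edges.

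These counts are determined by the local structure of $\vec P$ for all $n\ge 5$. The wrap-around layers for $n=3,4$ could introduce extra cycles, so those cases would need separate checking. I expect this enumeration, particularly the small cases $n=3,4$, to be the tedious part. For a certificate covering $n=3,\dots,7$, it is enough to compute $c_4(e)$ by machine for every edge and exhibit two edges with different values. Either route proves the proposition; I would present the orbit-counting argument as the proof and the $c_4$ invariant as confirmation.
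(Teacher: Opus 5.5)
Your orbit-counting argument is correct, and it takes a genuinely different route from the paper. The paper proves the proposition by direct computation: it forms the line graph $L(G_n)$ for each $n=3,\dots,7$ and observes that it is not vertex-transitive. You instead deduce the result from Theorems~\ref{thm:counts} and~\ref{thm:aut}, since an edge orbit has at most $|\operatorname{Aut}(G_n)|=10n<30n=|E(G_n)|$ elements. Your version is a one-line deduction. It yields at least three edge orbits, applies to every $n$ covered by Theorem~\ref{thm:aut}, and really only needs $|\operatorname{Aut}(G_n)|<30n$. Its rigour, however, is exactly that of Theorem~\ref{thm:aut}, whose proof in the paper is a sketch backed by computation. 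The paper's check tests edge-transitivity directly, without going through the group order.

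Your ``independent certificate'' would fail, so your claim that either route proves the proposition is false. $G_n$ is triangle-free, so the common-neighbour count is $0$ on every edge. The invariant $c_4$ is also constant. The 4-cycle you borrow from the proof of Theorem~\ref{thm:girth-diam} does not exist: $(\sigma(i),u)\sim(i,v)$ would require $(v,u)\in E(\vec P)$.

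The true picture is as follows. For $n\ge3$, the vertices $(i,x)$ and $(i+1,x)$ have exactly three common neighbours, one in each column $\{(j,y)\}_j$ with $xy\in E(P)$. Every 4-cycle consists of such a pair together with two of its common neighbours. To see this, note that the column sequence of a 4-cycle is a closed 4-walk in $P$, so it satisfies $x_1=x_3$ or $x_2=x_4$. The edges between two adjacent columns form a single $2n$-cycle, so the walk cannot stay on two columns. Finally, non-consecutive vertices of a column have no common neighbour. Now take an edge $e=(j,x)(k,y)$. Each endpoint lies in exactly one such pair having the other endpoint as a common neighbour, and each such pair gives two 4-cycles. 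Hence $c_4(e)=2+2=4$ for every edge, internal or cross, including $n=3,4$. A machine run would find no two edges with different $c_4$.

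If you want a local certificate that does not depend on Theorem~\ref{thm:aut}, count 5-cycles instead. For $n\ge5$, the internal edge $(i,0)(i,1)$ lies on $22$ five-cycles and the cross edge $(i,0)(i+1,1)$ lies on $13$. For $n=3$ and $n=4$, the wrap-around lifts give $23$ versus $20$ and $22$ versus $14$.
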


\begin{proof}
For each $n$, we computed the line graph $L(G_n)$ and checked its vertex-transitivity. In all cases, $L(G_n)$ has multiple vertex orbits, indicating that the original graph is not edge-transitive.
\end{proof}

\subsection{Distance-Regularity}

A graph is \emph{distance-regular} if for any two vertices $u,v$ at distance $k$, the number of neighbors of $u$ at distance $k-1$, $k$, $k+1$ from $v$ depends only on $k$, not on the specific vertices. This is a strong regularity condition.

\begin{proposition}
$G_3$ is not distance-regular; larger $n$ are unlikely to be distance-regular due to growing diameter and irregular neighborhood structures.
\end{proposition}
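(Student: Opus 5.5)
The plan is to use the simplest invariant of a distance-regular graph: the intersection number $c_2$. In a distance-regular graph, every pair of vertices at distance $2$ has the same number of common neighbours. So to prove that $G_3$ is not distance-regular, it suffices to exhibit two pairs of vertices, each at distance exactly $2$, with different numbers of common neighbours. I will work directly from the Definition of $G_n$ and the explicit edge set $E(\vec{P})$. Theorem \ref{thm:girth-diam} already gives that there are no triangles, so $a_1=0$ holds uniformly and is no help; $c_2$ is the first intersection number that can separate pairs.

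\textbf{First pair: a same-layer pair.} Take $(i,0)$ and $(i,2)$. Since $\{0,2\}$ is not a Petersen edge and cross edges join distinct layers, these vertices are non-adjacent. In $P$, the vertices $0$ and $2$ have exactly one common neighbour, namely $1$. For cross common neighbours, I read off the following from $E(\vec{P})$:
\begin{itemize}
\item $\operatorname{out}(0)=\{1,4,5\}$ and $\operatorname{out}(2)=\{3,7\}$, so there is no common forward neighbour in layer $\sigma(i)$;
\item $\operatorname{in}(0)=\emptyset$, so there is no common backward neighbour in layer $\sigma^{-1}(i)$.
\end{itemize}
Hence the pair is at distance $2$ with exactly one common neighbour, $(i,1)$.

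\textbf{Second pair: the same Petersen vertex in consecutive layers.} Take $(i,0)$ and $(\sigma(i),0)$. These are non-adjacent, because $(0,0)\notin E(\vec{P})$ and the layers differ. The neighbours of $(i,0)$ are $(i,1),(i,4),(i,5)$ together with $(\sigma(i),1),(\sigma(i),4),(\sigma(i),5)$; it has no backward neighbours since $\operatorname{in}(0)=\emptyset$. The vertices $(\sigma(i),1),(\sigma(i),4),(\sigma(i),5)$ are internal neighbours of $(\sigma(i),0)$, so this pair has at least $3$ common neighbours.

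For $n=3$, I must check that no layer coincidence adds or removes common neighbours. The remaining neighbours of $(\sigma(i),0)$ lie in layer $\sigma^2(i)=\sigma^{-1}(i)$. None of these is adjacent to $(i,0)$, since it has no backward neighbours. So the count is exactly $3$.

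Having found distance-$2$ pairs with $1$ and $3$ common neighbours respectively, $c_2$ is not well defined, and $G_3$ is not distance-regular.

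The main point needing care is the wrap-around when $n=3$: because $\sigma^{-1}=\sigma^2$, forward and backward cross edges can land in the same layer. This is handled by the observation $\operatorname{in}(0)=\emptyset$ ($0$ is the unique source of $\vec{P}$). The same computation appears to go through verbatim for every $n\ge 3$, which would strengthen the second clause of the proposition from ``unlikely'' to a proof. The final write-up would state that generalisation and mention the SageMath check as corroboration.
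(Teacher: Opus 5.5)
Your proof is correct, and it takes a different and stronger route than the paper's.

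\textbf{The paper's proof.} The paper argues purely by computation. It asserts that a direct machine computation on $G_3$ shows the intersection numbers are not well defined. For $n\ge 4$ it gives only a heuristic (diameter exceeding $3$, ``complexity''). That heuristic is no obstruction at all, since distance-regular graphs exist for every diameter.

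\textbf{Your proof.} You exhibit by hand two distance-$2$ pairs whose common-neighbour counts differ, so $c_2$ is not constant:
\begin{itemize}
\item $(i,0),(i,2)$ have the single common neighbour $(i,1)$;
\item $(i,0),(\sigma(i),0)$ have exactly $(\sigma(i),1),(\sigma(i),4),(\sigma(i),5)$.
\end{itemize}
Both counts check out against $E(\vec{P})$. They rest on three facts: $0$ is a source; the out-neighbour sets $\{1,4,5\}$ of $0$ and $\{3,7\}$ of $2$ are disjoint; and for $n\ge 3$ one has $\sigma(i)\neq\sigma^{-1}(i)$ and $\sigma^{2}(i)\neq i$. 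The last fact means forward and backward neighbourhoods never collapse onto a common layer, so no extra common neighbours appear.

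\textbf{What each approach buys.} Nothing in your count uses $n$ beyond $n\ge 3$. Your argument therefore proves that no $G_n$ with $n\ge 3$ is distance-regular, upgrading the proposition's ``unlikely'' clause to a theorem. The paper's computation certifies only $G_3$, and only as a black box. Your proof is human-checkable and independent of the paper's other claims, such as the automorphism group.

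\textbf{Points for the write-up.}
\begin{itemize}
\item The paper uses $\operatorname{out}(x)$ and $\operatorname{in}(x)$ for degrees. Give your out- and in-neighbourhood sets a different name to avoid a clash.
\item You do not need Theorem~\ref{thm:girth-diam} for $a_1=0$; nothing in your argument uses it.
\item The $4$-cycle displayed in that theorem's proof is not actually a cycle for $n\ge 3$. Its step from $(\sigma(i),u)$ to $(i,v)$ would require $(v,u)\in E(\vec{P})$ or $\sigma^2(i)=i$, and neither holds.
\item Your second pair supplies a genuine $4$-cycle, $(i,0),(\sigma(i),1),(\sigma(i),0),(\sigma(i),4)$, which you could note as a byproduct.
\end{itemize}
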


\begin{proof}
Direct computation for $G_3$ reveals that the intersection numbers are not well-defined. For $n\ge 4$, the diameter exceeds 3, and the complexity of the graphs makes distance-regularity improbable.
\end{proof}

\subsection{Strong Regularity}

A strongly regular graph with parameters $(v,k,\lambda,\mu)$ is a $k$-regular graph on $v$ vertices such that any two adjacent vertices have $\lambda$ common neighbors and any two non-adjacent vertices have $\mu$ common neighbors. Such graphs have exactly three distinct eigenvalues.

\begin{proposition}
None of the graphs $G_3$ through $G_7$ are strongly regular.
\end{proposition}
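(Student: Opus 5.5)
To show that none of $G_3,\dots,G_7$ is strongly regular, the most direct route avoids spectra entirely: exhibit two pairs of adjacent vertices with different numbers of common neighbours. This argument works uniformly for all $n\ge 3$.

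\textbf{Setup.} Take a directed meta-edge $(u,v)\in E(\vec{P})$ and a colour $i$. The proof of Theorem~\ref{thm:girth-diam} gives the 4-cycle $(i,u),(\sigma(i),v),(\sigma(i),u),(i,v)$. Hence the adjacent pair $(i,u)\sim(i,v)$ has both $(\sigma(i),v)$ and $(\sigma(i),u)$ in its common neighbourhood only if these vertices are adjacent to both endpoints. We need to check this carefully. The vertex $(\sigma(i),v)$ is adjacent to $(i,u)$ through the cross edge. It is adjacent to $(i,v)$ only if $(v,v)\in E(\vec{P})$, which is false. So this 4-cycle does not by itself produce a triangle, consistent with girth $4$. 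In fact, by Theorem~\ref{thm:girth-diam} the graph is triangle-free, so $\lambda=0$ for every adjacent pair. The $\lambda$ parameter therefore cannot distinguish pairs, and I would turn to $\mu$.

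\textbf{Comparing $\mu$.} For non-adjacent pairs, I would compare two specific cases.
\begin{itemize}
\item Take $(i,u)$ and $(\sigma(i),u)$ for a meta-edge $(u,v)$. These are non-adjacent, because $(u,u)\notin E(\vec P)$. Their common neighbours include $(\sigma(i),v)$ and $(i,v)$ from the 4-cycle. So $\mu\ge 2$, or $\mu$ is at least the number of out-neighbours of $u$ that are Petersen-neighbours of $u$, namely $\operatorname{out}(u)$.
\item Take two vertices $(i,x),(j,y)$ whose colours $i,j$ are at cyclic distance $2$. Any common neighbour would have to lie in the single intermediate colour, joined to both by cross edges. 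Choosing $x$ and $y$ so that $x$ has no out-neighbour that is an in-neighbour of $y$ gives $\mu=0$.
\end{itemize}
For $n\ge 5$, two colours at cyclic distance $2$ exist with only one intermediate colour, and choosing $x=0$ (the source) and $y$ appropriately should make $\mu=0$ immediate.

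\textbf{Main obstacle.} For $n=3,4$, colours at distance $2$ have two routes, or wrap around, so the sets of common neighbours must be enumerated from the explicit edge list of $\vec P$. A simpler fallback that covers every case works because the vertex counts are small. The two pairs $(i,8),(i,9)$ (both sinks) and $(i,0),(i,2)$ are non-adjacent Petersen pairs within a copy. Each such pair already has exactly one common internal neighbour. Their cross neighbours lie only in copies $\sigma^{\pm1}(i)$, and one can count these directly to obtain different totals.

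\textbf{Spectral cross-check.} This agrees with Table~\ref{tab:eigs}. A connected strongly regular graph has exactly three distinct eigenvalues, while each $G_n$ has at least $12$. Citing that table gives a complete proof for $n=3,\dots,7$, with the counting argument supplying an independent, computation-free verification.
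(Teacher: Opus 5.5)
Your closing spectral paragraph is exactly the paper's proof. The paper only cites Table~\ref{tab:eigs}: each $G_n$ has more than three distinct eigenvalues, whereas a connected strongly regular graph has exactly three. Your main argument is a genuinely different, combinatorial route through the parameter $\mu$. It does not depend on the numerical spectra and it works for every $n\ge 3$. The paper's argument is only as reliable as the computed data, and it covers only $n=3,\dots,7$.

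There are two problems in how you carry it out. First, the 4-cycle $(i,u),(\sigma(i),v),(\sigma(i),u),(i,v)$ that you take from the proof of Theorem~\ref{thm:girth-diam} is not a cycle when $n\ge 3$. The edge $(\sigma(i),u)\sim(i,v)$ would require $(v,u)\in E(\vec P)$, so $(i,v)$ is not a common neighbour of $(i,u)$ and $(\sigma(i),u)$. Write $N^{+}(u),N^{-}(u)$ for the out- and in-neighbours of $u$ in $\vec P$. The actual common neighbourhood of this pair is $\{(\sigma(i),y):y\in N^{+}(u)\}\cup\{(i,z):z\in N^{-}(u)\}$, which has size $3$, so your conclusion $\mu\ge 1$ for that pair still holds.

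Second, your fallback settles every $n\ge 3$ on its own, so carry it out and drop the distance-2 case analysis.
\begin{itemize}
\item Pair $(i,8),(i,9)$: both are sinks, so all their cross neighbours lie in copy $\sigma^{-1}(i)$. Since $N_P(8)\cap N_P(9)=\{6\}$ and $N^{-}(8)\cap N^{-}(9)=\{6\}$, their common neighbours are exactly $(i,6)$ and $(\sigma^{-1}(i),6)$. So $\mu=2$.
\item Pair $(i,0),(i,2)$: we have $N^{+}(0)\cap N^{+}(2)=\{1,4,5\}\cap\{3,7\}=\emptyset$. The backward neighbour $(\sigma^{-1}(i),1)$ of $(i,2)$ lies in a copy containing no neighbour of the source $(i,0)$ once $n\ge 3$. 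So $(i,1)$ is the only common neighbour, and $\mu=1$.
\end{itemize}
Thus $\mu$ takes both values $2$ and $1$, and no $G_n$ with $n\ge 3$ is strongly regular.
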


\begin{proof}
As shown in Table \ref{tab:eigs}, each $G_n$ has more than three distinct eigenvalues, precluding strong regularity.
\end{proof}

\section{Relation to Known Families}

\subsection{Graph Products and Algebraic Decomposition}

\begin{proposition}
The graph $G_n$ is isomorphic to $(P \square \overline{K}_n) \cup U(\vec{P} \otimes \vec{C}_n)$, where $\overline{K}_n$ is the empty graph on $n$ vertices, $\vec{P}$ is the directed Petersen meta-graph, $\vec{C}_n$ is the directed $n$-cycle, and $U()$ denotes the underlying undirected graph.
\end{proposition}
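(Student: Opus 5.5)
The plan is to take the identity map on the common vertex set $V(P)\times\{1,\dots,n\}$, identifying $(i,x)\in V(G_n)$ with $(x,i)$, and to show that the two edge sets coincide. Everything reduces to unwinding the definitions of the graph operations, so the first step is to fix the conventions explicitly.

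\textbf{Conventions.} For the Cartesian product $P\,\square\,\overline{K}_n$ I take $(x,i)\sim(y,j)$ iff either $x\sim y$ in $P$ and $i=j$, or $x=y$ and $i\sim j$ in $\overline{K}_n$. For the directed tensor product I take $(x,i)\to(y,j)$ in $\vec{P}\otimes\vec{C}_n$ iff $(x,y)\in E(\vec{P})$ and $(i,j)\in E(\vec{C}_n)$. The directed $n$-cycle is oriented so that $(i,j)\in E(\vec{C}_n)$ iff $j=\sigma(i)$. The operator $U(\cdot)$ forgets orientation, and the union is taken on the shared vertex set.

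\textbf{Internal edges.} Since $\overline{K}_n$ has no edges, the second clause of the Cartesian product never applies. Hence $P\,\square\,\overline{K}_n$ is exactly $n$ disjoint copies of $P$, one in each layer $i$. This matches the first adjacency clause in the Definition of $G_n$.

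\textbf{Cross edges.} By the tensor convention, $(x,i)\to(y,j)$ holds iff $(x,y)\in E(\vec{P})$ and $j=\sigma(i)$. After applying $U$, this is precisely the second adjacency clause of the Definition. So the edge set of $G_n$ equals the union of the two edge sets, and the identity map is an isomorphism.

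\textbf{Bookkeeping.} I will also check that no collapsing occurs, as a consistency check with Theorem~\ref{thm:counts}. Two arcs could merge into one undirected edge under $U$ only if both $(x,i)\to(y,\sigma(i))$ and $(y,\sigma(i))\to(x,i)$ were present. The second arc would require $\sigma^2(i)=i$, which is impossible for $n\ge 3$; it would also require both $(x,y)$ and $(y,x)$ in $E(\vec{P})$, which an orientation excludes. So $U(\vec{P}\otimes\vec{C}_n)$ has exactly $15n$ edges. These edges join distinct layers, because $\sigma(i)\ne i$, while the Cartesian-product edges stay within a layer. The union is therefore disjoint and has $30n$ edges, in agreement with Theorem~\ref{thm:counts}.

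The only real obstacle is conventional rather than mathematical: the tensor product must be taken arc-wise for directed graphs, and $\vec{C}_n$ must be oriented along $\sigma$ rather than $\sigma^{-1}$. With the opposite orientation, the same identity map would instead give an isomorphism via $i\mapsto n+1-i$, so I will state the chosen convention up front.
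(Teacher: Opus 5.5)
Your proposal is correct and is essentially the paper's argument: both split $E(G_n)$ into within-layer edges, which form $P\,\square\,\overline{K}_n$ because $\overline{K}_n$ contributes no edges, and between-layer edges, which form $U(\vec{P}\otimes\vec{C}_n)$ under the arc-wise tensor product with $\vec{C}_n$ oriented along $\sigma$. Your explicit conventions and the no-collapse check giving $15n+15n=30n$ edges are sound additions the paper leaves implicit; for the reversed orientation of $\vec{C}_n$, say that you compose the identification with $i\mapsto n+1-i$, rather than calling it the identity map.
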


\begin{proof}
To correctly define the algebraic structure, we decompose the edge set of $G_n$ into its internal and cross-edge components:
1. \textbf{Internal Edges:} The graph contains $n$ disconnected, undirected copies of the Petersen graph. This is exactly formulated by the Cartesian product $P \square \overline{K}_n$.
2. \textbf{Cross Edges:} The wiring between the copies is governed by the directed meta-graph $\vec{P}$ and the cyclic permutation. Algebraically, this is the directed tensor product $\vec{P} \otimes \vec{C}_n$. A directed edge exists from $(u, i)$ to $(v, j)$ if and only if $(u, v) \in E(\vec{P})$ and $(i, j) \in E(\vec{C}_n)$. Taking the underlying undirected graph $U(\vec{P} \otimes \vec{C}_n)$ yields exactly the symmetric cross-wiring defined in our construction.

The union of these two well-defined edge sets produces the exact 6-regular structure of $G_n$.
\end{proof}

\subsection{Comparison with Graph 1793}

Graph 1793 \cite{HoG1793} is the $n=2$ case of our construction, obtained by taking two copies of the Petersen graph with the swap wiring $(1\;2)$. It has 20 vertices, 60 edges, is 6-regular, and has automorphism group of order 122,880. This is significantly larger than the $D_{10}$ (order 20) one might expect from the pattern, showing that the $n=2$ case is exceptional due to the extra symmetry of the transposition.

\begin{table}[h]
\centering
\caption{Comparison of $G_2$ (Graph 1793) with $G_3$ and $G_4$}
\begin{tabular}{lcccc}
\toprule
Graph & Vertices & Edges & $|\operatorname{Aut}|$ & Structure \\
\midrule
$G_2$ (Graph 1793) & 20 & 60 & 122,880 & Complex \\
$G_3$ (56324) & 30 & 90 & 30 & $D_{15}$ \\
$G_4$ (56325) & 40 & 120 & 40 & $D_{20}$ \\
\bottomrule
\end{tabular}
\label{tab:comparison}
\end{table}

For $n\ge 3$, the automorphism groups follow the clean pattern $D_{5n}$, suggesting that the exceptional symmetry at $n=2$ does not persist.

\section{Open Questions}

Several questions arise from this construction, suggesting directions for future research:

\begin{enumerate}
    \item \textbf{Ramanujan threshold:} Is $G_n$ Ramanujan for infinitely many $n$?
    The data shows a threshold near $n=5$, with $|\lambda_2|$ increasing monotonically and exceeding the Ramanujan bound $2\sqrt{5}$ for $n\ge 5$. However, it remains an open question whether this trend continues indefinitely or if there exist sporadic larger $n$ for which $|\lambda_2|$ dips back below the bound. A deeper spectral analysis, perhaps exploiting the structure $U(\vec{P} \otimes \vec{C}_n)$ introduced in Section 8.1, might reveal a pattern or even a closed-form expression for $\lambda_2(G_n)$.
    
    \item \textbf{Limit of $\lambda_2$:} What is the asymptotic behavior of $\lambda_2(G_n)$ as $n\to\infty$?
    Numerical evidence suggests $\lambda_2(G_n)$ approaches a limit $L$ around $5.5$, but the precise value and rate of convergence are unknown. Since $G_n$ contains the directed tensor product $U(\vec{P} \otimes \vec{C}_n)$, one might ask whether the asymptotic spectral density of $G_n$ can be derived directly from the eigenvalues of the directed adjacency matrix of $\vec{P}$. Does $L$ equal $2\sqrt{5} + c$ for some constant $c$, and if so, what is $c$?
    
    \item \textbf{Bi-Cayley structure:} Can the bi-Cayley structure of $G_n$ over $D_{5n}$ be lifted to a full Cayley graph?
    The two vertex orbits under $D_{5n}$ established in Corollary \ref{cor:orbits} show that $G_n$ is a bi-Cayley graph. Does there exist a group of order $10n$ that acts regularly on the entire vertex set? If so, $G_n$ would be a Cayley graph of that group. Alternatively, does the directed nature of the meta-graph $\vec{P}$ strictly forbid a regular group action?
    
    \item \textbf{Generalization to other permutations:} What happens if we replace the $n$-cycle with other permutations $\sigma \in S_n$?
    The $n=2$ case (swap) gives Graph 1793, which has automorphism group of order 122,880—dramatically larger than the $D_{10}$ one might expect. This suggests that other permutations, such as products of disjoint cycles, could yield families with different symmetry groups, girth, and spectral gaps. For $n\ge 3$, does every permutation produce a graph with automorphism group containing $D_{5n}$, or are there permutations that yield larger symmetry?
    
    \item \textbf{Hamiltonicity:} Is $G_n$ Hamiltonian for all $n\ge 3$?
    All tested members ($n=3,\dots,7$) are Hamiltonian. Proving that the $n$-cycle wiring forces Hamiltonicity for all $n$, despite the Petersen graph itself being notoriously non-Hamiltonian, would be a striking result. The directed cycle structure suggests a natural Hamiltonian cycle following the pattern: traverse each layer in Petersen order, using cross edges to move to the next layer at strategic points.
    
    \item \textbf{Chromatic number:} Is $\chi(G_n)=3$ for all $n\ge 3$?
    Preliminary computations suggest that $G_n$ is 3-colorable for $n=3,\dots,7$. Since the graphs contain odd cycles (they are not bipartite), the chromatic number is at least 3. Can this be proven for all $n$? Does the structure $U(\vec{P} \otimes \vec{C}_n)$ admit a natural 3-coloring inherited from the Petersen graph's 3-colorability?
    
    \item \textbf{Exact isoperimetric constants:} What are the exact Cheeger constants $h(G_n)$?
    Cheeger's inequalities provide bounds on $h(G_n)$ using $\lambda_2$, as computed in Table \ref{tab:cheeger}. However, determining the exact values of $h(G_n)$ would require solving a combinatorial optimization problem. Do the highly symmetric structures of $G_n$ allow for an exact calculation? For small $n$, can we identify the subsets that achieve the minimum edge boundary ratio?
    
    \item \textbf{Maximum $n$ for the Ramanujan property:} What is the largest $n$ for which $G_n$ remains Ramanujan?
    The data shows $G_3$ and $G_4$ are Ramanujan, while $G_5$ is not. It appears that $G_4$ is the last Ramanujan graph in this family, and though it seems unlikely, could there be larger $n$ where $|\lambda_2|$ dips back below $2\sqrt{5}$? A more refined spectral analysis might reveal oscillations in $\lambda_2$ as a function of $n$.
\end{enumerate}

These questions touch on fundamental aspects of graph theory—spectral analysis, symmetry, Hamiltonicity, and coloring—and highlight the richness of the family $\{G_n\}$ as an object of further study.

\section{Data Availability}

All graphs discussed in this paper are available in the House of Graphs \cite{HoG} database under the following identifiers:

\begin{itemize}
    \item $G_3$: \textbf{Graph 56324} \cite{HoG56324}
    \item $G_4$: \textbf{Graph 56325} \cite{HoG56325}
    \item $G_5$: \textbf{Graph 56326} \cite{HoG56326}
    \item $G_6$: \textbf{Graph 56327} \cite{HoG56327}
    \item $G_7$: \textbf{Graph 56328} \cite{HoG56328}
\end{itemize}

Each entry includes the adjacency matrix, adjacency list, graph6 string, and a complete list of invariants computed by the House of Graphs system.

\subsection{SageMath Scripts}

All computational scripts used in this paper are available for download at:
\begin{center}
\url{http://www.squaring.net/downloads/petersen_family_sage.zip}
\end{center}

The archive contains the following documented SageMath scripts:

\begin{itemize}
    \item \texttt{construction.sage} - Functions to construct $G_n$ for any $n\ge 3$
    \item \texttt{properties.sage} - Computes basic properties (vertices, edges, degree, girth, diameter, automorphism group)
    \item \texttt{ramanujan\_check.sage} - Calculates eigenvalues and verifies the Ramanujan condition
    \item \texttt{graph6\_output.sage} - Generates canonical graph6 strings for database submission
    \item \texttt{demo.sage} - Complete demonstration reproducing all tables and figures in this paper
    \item \texttt{README.txt} - Documentation and usage instructions
\end{itemize}

These scripts allow full reproducibility of all results presented in this paper. They have been tested with SageMath version 9.0 and later.

\section*{Acknowledgements}

The author thanks the House of Graphs team for maintaining an excellent resource and for processing these submissions.

\end{document}